\newtheorem{theorem}{Theorem}
\newtheorem{definition}[theorem]{Definition}
\newtheorem{proposition}[theorem]{Proposition}
\newtheorem{remark}[theorem]{Remark}
\newtheorem{corollary}[theorem]{Corollary}
\newtheorem{lemma}[theorem]{Lemma}
\begin{document}

\title{A notion of continuity in discrete spaces and applications}

\author{Valerio Capraro}
\address{University of Neuchatel, Switzerland}
\thanks{Supported by Swiss SNF Sinergia project CRSI22-130435}
\email{valerio.capraro@unine.ch}

\keywords{A-homotopy theory, $\ell^p$-distortion, digital Jordan curve theorem.}

\subjclass[2000]{Primary 52A01; Secondary 46L36}

\date{}

\maketitle

\begin{abstract}
We propose a notion of continuous path for locally finite metric spaces, taking inspiration from the recent development of A-theory for locally finite connected graphs. We use this notion of continuity to derive an analogue in $\mathbb Z^2$ of the Jordan curve theorem and to extend to a quite large class of locally finite metric spaces (containing all finite metric spaces) an inequality for the $\ell^p$-distortion of a metric space that has been recently proved by Pierre-Nicolas Jolissaint and Alain Valette for finite connected graphs.
\end{abstract}

\section{Introduction, main results and motivations}

The A-theory is a homotopy theory for locally finite connected graphs that has been developed by Barcelo et alii in a recent series of three papers \cite{Ba-Kr-La-We01},\cite{Ba-La05},and \cite{Ba-Ba-Lo-Ra06}. This theory is based on ideas that go back to the work of Atkin\cite{At74},\cite{At76} - indeed, the letter A is in honour of Atkin - and that were already re-explored in \cite{Kr-La98}. This theory is based on a notion of continuous path that makes sense for all locally finite metric spaces\footnote{A metric space is called locally finite if every bounded set is finite.}.

\begin{definition}\label{def:discretepaths}
Let $(X,d)$ be a locally finite metric space. Given $x\in X$, denote by $dN_1(x)$ the smallest closed ball with the center $x$ which contains at least two points. A finite sequence of points in $X$, say $x_0,x_1,\ldots, x_{n-1},x_n$, is called a continuous path if
$$
x_i\in dN_1(x_{i-1})\qquad\text{and }\qquad x_{i-1}\in dN_1(x_i)\qquad\text{for all }i=1\ldots n
$$
A locally finite metric space is called path-connected if any pair of points can be joined by a continuous path.
\end{definition}

As we will show in the next sections, the main interest of this definition is that it allows, on one hand, to bring down results from Topology of Manifolds to locally finite spaces (as the Jordan curve theorem); on the other hand, it allows to bring up results from Graph Theory to locally finite metric spaces (as the P.N.Jolissaint-Valette inequality).

We now state our main results. For the exact definitions, we refer the reader to the next sections.

\begin{theorem}\label{th:jordan1}
Let $\gamma$ be a simple circuit in $\mathbb Z^2$. Then $\mathbb Z^2\setminus\gamma$ has two path connected components, one finite and one infinite, and $\gamma$ is the boundary of each of them.
\end{theorem}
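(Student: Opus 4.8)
The plan is to deduce the statement from the classical Jordan curve theorem in $\mathbb{R}^{2}$, so that almost all of the work lies in transferring information between $\mathbb{Z}^{2}$ and $\mathbb{R}^{2}$. Write $\gamma=(x_{0},x_{1},\dots,x_{n}=x_{0})$ and let $\tilde\gamma\subset\mathbb{R}^{2}$ be the polygonal curve obtained by joining each $x_{i-1}$ to $x_{i}$ by the straight segment between them. Consecutive points of a continuous path in $\mathbb{Z}^{2}$ lie at distance one, so each of these segments has length one, its relative interior contains no point of $\mathbb{Z}^{2}$, and two distinct such segments meet, if at all, only at a common lattice endpoint. Using this together with the simplicity of $\gamma$, I would check that $\tilde\gamma$ is a Jordan curve, and then the Jordan curve theorem gives $\mathbb{R}^{2}\setminus\tilde\gamma=\tilde U\sqcup\tilde V$ with $\tilde U$ bounded, $\tilde V$ unbounded, and $\partial\tilde U=\partial\tilde V=\tilde\gamma$.

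Put $U=\tilde U\cap\mathbb{Z}^{2}$ and $V=\tilde V\cap\mathbb{Z}^{2}$. No lattice point lies on $\tilde\gamma\setminus\gamma$, so $U$ and $V$ partition $\mathbb{Z}^{2}\setminus\gamma$; moreover $V$ is infinite, since it contains all sufficiently distant lattice points, and $U$ is finite because $\tilde U$ is bounded. That no continuous path in $\mathbb{Z}^{2}\setminus\gamma$ joins a point of $U$ to a point of $V$ is now immediate: a continuous lattice path $p_{0},\dots,p_{m}$ turns in the same way into a polygonal arc in $\mathbb{R}^{2}$ that is disjoint from $\tilde\gamma$ --- a common point would be a lattice point of $\tilde\gamma$, hence a point of $\gamma$, hence one of the $p_{j}$, which is excluded --- so $p_{0}$ and $p_{m}$ lie in the same component of $\mathbb{R}^{2}\setminus\tilde\gamma$. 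A local analysis at each point of $\gamma$, where one uses the ``thinness'' built into the notion of a simple circuit (which forbids $\gamma$ to touch itself except along the circuit), then identifies $\gamma$ with the discrete boundary of $U$ and with that of $V$.

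The remaining point, which I expect to be the crux, is that $U$ and $V$ are themselves continuously path-connected: this is exactly where the discrete picture does not automatically inherit the topology of $\mathbb{R}^{2}$. One must once more use the defining features of a simple circuit: they make $\tilde\gamma$ a rectilinear Jordan polygon of a ``well composed'' type, free of the diagonal-pinch configuration, and for such polygons the inner and the outer lattice points do form continuously path-connected sets. I would prove this by induction on the length $n$ of $\gamma$. In the inductive step one exhibits a continuous lattice path $\delta$ that runs inside $\tilde U$, joins two vertices of $\gamma$, and meets $\gamma$ only at those two points, so that $\gamma$ is cut into two strictly shorter simple circuits $\gamma_{1}$ and $\gamma_{2}$ whose inner regions, together with the interior vertices of $\delta$, cover $U$; the inductive hypothesis applied to $\gamma_{1}$ and $\gamma_{2}$ and a gluing along $\delta$ then give that $U$ is continuously path-connected, and the statement for $V$ follows from the complementary argument of joining an arbitrary point of $V$ to a fixed far-away point while staying outside $\tilde\gamma$. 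The genuine difficulty is the bookkeeping of this cutting step: that an admissible cut $\delta$ always exists, that the two pieces are again simple circuits --- including the degenerate cases where a piece encloses no lattice point --- and that the gluing is valid at the level of discrete adjacency.
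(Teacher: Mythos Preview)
Your reduction to the classical Jordan theorem via the polygonal curve $\tilde\gamma$, the definition of $U=\tilde U\cap\mathbb Z^2$ and $V=\tilde V\cap\mathbb Z^2$, and the argument that no discrete path crosses between them, are exactly the paper's set-up. The divergence is in how you show that $U$ and $V$ are themselves path-connected. The paper does \emph{not} induct on the length of $\gamma$; instead, given two lattice points of $U$ it takes an ordinary continuous path between them inside $\tilde U$, chops $[0,1]$ into subintervals on each of which the path remains inside a single closed unit lattice square, and then replaces the path, square by square, by lattice steps. The only delicate case is when two consecutive sample points are diagonally opposite in a square; there the simplicity hypothesis on $\gamma$ guarantees that at least one of the two intermediate lattice vertices lies in $U$, so the discrete path can be continued. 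The same procedure handles $V$. Your inductive cut-and-glue plan is a legitimate alternative, but it carries exactly the bookkeeping burden you anticipate: one must produce an admissible chord $\delta$, and then check that each of the two pieces $\gamma_1,\gamma_2$ is again a simple circuit --- the $dB_2$ condition of Definition~\ref{def:simplecurve} can fail at the junction vertices of $\delta$ with $\gamma$ unless $\delta$ is chosen with care --- and finally treat $V$ by a separate argument. The paper's square-by-square discretization sidesteps all of this in a couple of paragraphs. One further remark: the paper's detailed statement adds the hypothesis that $\gamma$ contains no unit square, which is what makes $U$ non-empty; you should carry that hypothesis as well, since otherwise the ``two components'' conclusion is false for the $1\times 1$ square.
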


The reason behind the choice of this application is that the Jordan curve theorem in $\mathbb Z^2$ is of interest in Digital Geometry, a branch of Theoretical Computer Science that studies, roughly speaking, the geometry of the screen of a computer. The basic idea is that the screen of a computer is modeled by the grid $\mathbb Z^2$, whose points are pixels, and one has to turn on some pixels in order to form an image. In this context, the importance of a Jordan curve theorem is clear. For an introduction to Digital Geometry, see the beautiful introduction and Sec. 1.2 of Melin's Ph.D. thesis\cite{Me08}; references on digital version of the Jordan curve theorem include, for instance, \cite{Kh-Ko-Me90},\cite{Sl04},\cite{Bo08} and references therein. Our proposal of a Jordan curve theorem is different from those ones, since it is based on a new definition of a simple curve and on a different notion of connectivity. The same notion of connectivity has been considered in \cite{Ki00} and \cite{Sl06}, where also versions of the Jordan curve theorem have been presented. In particular, in \cite{Ki00} a Jordan curve theorem has been proved for the so-called strict curves. We will see that every strict curve is also simple and that there are simple curves that are not strict (see Remark \ref{rem:comparison}). 

Our second main result is the following

\begin{theorem}\label{th:jolivale1}
Let $X$ be a locally finite metric space such that every path-connected component $X_i$ is finite. The $\ell^p$-distortion of $X$ has the following lower bound
\begin{align}
c_p(X)\geq\sup_i\frac{D(X_i)}{2d(X_i)}\left(\frac{|X_i|}{|E(X_i)|\lambda_1^{(p)}}\right)^{\frac{1}{p}}
\end{align}
\end{theorem}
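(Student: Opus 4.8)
The plan is to reduce the inequality, by an elementary restriction argument, to the case of a single finite path-connected metric space, where it becomes the Jolissaint--Valette inequality with the combinatorial graph metric replaced by the given metric \(d\); the only genuinely new feature is that the edges of \(X_i\) now have \(d\)-length at most \(d(X_i)\) instead of exactly \(1\).

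\emph{Reduction to one component.} If \(c_p(X)=+\infty\) there is nothing to prove, so assume \(c_p(X)<\infty\). For \(\varepsilon>0\) choose \(F\colon X\to\ell^p\) of distortion at most \(c_p(X)+\varepsilon\); for each path-connected component \(X_i\) the restriction \(F|_{X_i}\) embeds \(X_i\) into \(\ell^p\) with distortion no larger (both suprema defining the distortion are taken over a smaller set of pairs), so \(c_p(X_i)\le c_p(X)+\varepsilon\). Letting \(\varepsilon\to0\) gives \(c_p(X)\ge\sup_i c_p(X_i)\), and it suffices to prove, for a single finite path-connected metric space \(Y=X_i\), that \(c_p(Y)\ge\frac{D(Y)}{2\,d(Y)}\bigl(|Y|/(|E(Y)|\,\lambda_1^{(p)})\bigr)^{1/p}\), where \(E(Y)\) is the edge set of the graph \(G_Y\) on \(Y\) with \(x\sim y\) iff \(x\in dN_1(y)\) and \(y\in dN_1(x)\). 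Two points must be noted here. First, \(G_Y\) is connected: a continuous path in \(X\) joining two points of \(Y\) stays in \(Y\) and remains a continuous path for the induced metric --- passing to a subspace can only enlarge the radii of the balls \(dN_1(\cdot)\) of Definition~\ref{def:discretepaths} --- so any two vertices of \(G_Y\) are joined by a walk; hence \(\lambda_1^{(p)}=\lambda_1^{(p)}(Y)>0\). Second, every edge \(\{x,y\}\in E(Y)\) has \(d(x,y)\le d(Y)\) (this is the role of the scalar \(d(Y)\), which bounds the \(d\)-length of edges of \(G_Y\)); this single number absorbs the whole discrepancy between \(d\) and the combinatorial metric of \(G_Y\), and is what turns the coefficient \(\operatorname{diam}/2\) of the graph statement into \(D(Y)/(2d(Y))\).

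\emph{The energy estimate.} Write \(c=c_p(Y)\); given \(\varepsilon>0\) pick \(F\colon Y\to\ell^p\) of distortion \(<c+\varepsilon\), normalised so that \(\|F(x)-F(y)\|_p\le d(x,y)\) and \(\|F(x)-F(y)\|_p\ge(c+\varepsilon)^{-1}d(x,y)\) for all \(x,y\). Decompose \(F=(F_k)_{k\ge1}\) with \(F_k\colon Y\to\mathbb R\); all the coordinate sums below converge, since \(\sum_k|F_k(x)-F_k(y)|^p=\|F(x)-F(y)\|_p^p\). Consider the \(p\)-energy of \(F\) across \(E(Y)\),
\[
\mathcal E(F)=\sum_{\{x,y\}\in E(Y)}\|F(x)-F(y)\|_p^p=\sum_{k\ge1}\ \sum_{\{x,y\}\in E(Y)}|F_k(x)-F_k(y)|^p .
\]
Since \(F\) is \(1\)-Lipschitz and each edge has \(d\)-length at most \(d(Y)\), we get \(\mathcal E(F)\le|E(Y)|\,d(Y)^p\). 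For the reverse estimate, apply to each coordinate \(F_k\) the \(p\)-Poincaré inequality that defines \(\lambda_1^{(p)}(Y)\) --- controlling the spread of \(F_k\) on \(Y\) by its \(p\)-energy across \(E(Y)\) --- and sum over \(k\); then bound the resulting spread of \(F\) from below by retaining a single diametral pair \(x_0,y_0\) (with \(d(x_0,y_0)=D(Y)\)) and using, for any centre \(\mu\), the estimate \(\|F(x_0)-\mu\|_p+\|F(y_0)-\mu\|_p\ge\|F(x_0)-F(y_0)\|_p\ge(c+\varepsilon)^{-1}D(Y)\) together with convexity of \(t\mapsto t^p\). Putting the two sides together and rearranging yields \((c+\varepsilon)^p\ge\frac{D(Y)^p}{2^p\,d(Y)^p}\cdot\frac{|Y|}{|E(Y)|\,\lambda_1^{(p)}}\); since \(\varepsilon>0\) was arbitrary and taking \(p\)-th roots, this is the required bound for \(Y\), and taking the supremum over \(i\) finishes the proof.

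\emph{Main obstacle.} The analytic heart --- the two-sided control of \(\mathcal E(F)\), from above by \(1\)-Lipschitzness and the edge-length bound, from below by \(\lambda_1^{(p)}\) and the triangle inequality on a diametral pair --- is exactly the Jolissaint--Valette computation, and it goes through once it is read off coordinate-by-coordinate and the graph metric is replaced by \(d\). I therefore expect the only real work to lie in the part specific to the metric-space setting: checking that the A-graph localises correctly --- that the A-graph of a finite path-connected metric space is connected (so \(\lambda_1^{(p)}>0\)) and that all its edges have \(d\)-length controlled by the single constant \(d(X_i)\) --- and carrying the normalising constants carefully enough that the final coefficient comes out as exactly \(D(X_i)/(2d(X_i))\) rather than some other multiple of it. The reduction to components is routine, once one observes that the statement is vacuous when \(c_p(X)=\infty\).
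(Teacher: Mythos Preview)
Your reduction to a single component is fine and matches the paper (Corollary~\ref{cor:jolivale}). After that, though, you have misread two of the objects in the statement. The quantity $D(X)$ is \emph{not} the diameter: it is the permutation displacement $D(X)=\max_{\alpha\in\mathrm{Sym}(X)}\min_{x}d(x,\alpha(x))$ (equation~\eqref{eq:displacement}), so there is no ``diametral pair $x_0,y_0$ with $d(x_0,y_0)=D(Y)$'' to retain. And $E(X)$ is not the edge set of the $A$-graph: the paper defines it in~\eqref{eq:edgesmetric} via coverings $\mathcal P(x,y)$ that slice a minimal continuous path between $x$ and $y$ into $s=\lfloor d(x,y)\rfloor$ blocks, so that a typical ``edge'' is a pair $(p_i^-,p_i^+)$ which may well span more than one $A$-step. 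All three of $\lambda_1^{(p)}$, $|E(X_i)|$ and $d(X_i)$ are computed from this set, so with your $A$-graph definition you are aiming at a different inequality from the one stated.

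More seriously, your energy argument does not produce the claimed bound even with your own definitions. From the Poincar\'e step you get $\mathcal E(F)\ge\lambda_1^{(p)}\sum_x\|F(x)-\mu\|_p^p$; keeping only the two terms $x_0,y_0$ and using convexity yields at best $\|F(x_0)-F(y_0)\|_p^p/2^{p-1}$, and combining with $\mathcal E(F)\le |E(Y)|\,d(Y)^p$ gives $(c+\varepsilon)^p\ge \lambda_1^{(p)}\,\mathrm{diam}(Y)^p/\bigl(2^{p-1}|E(Y)|\,d(Y)^p\bigr)$, which has no $|Y|$ in it. The paper obtains the factor $|X|$ by a different mechanism: one fixes a fixed-point-free permutation $\alpha$ and uses that \emph{every} summand in $\sum_{x}\|G(x)-G(\alpha(x))\|_p^p$ is at least $\rho(\alpha)^p/\|G^{-1}\|_{Lip}^p$, so the sum is at least $|X|\,\rho(\alpha)^p/\|G^{-1}\|_{Lip}^p$; this is then controlled from above by $2^p\sum_x\|G(x)\|_p^p$ (Lemma~\ref{lem:jolivalette}(1)), which for a suitably centred $G$ is bounded by $\lambda_1^{(p)}{}^{-1}\sum_e\|G(e^+)-G(e^-)\|_p^p$ (Lemma~\ref{lem:jolivalette}(2)), and finally by $|E(X)|\,d(X)^p\|G\|_{Lip}^p$ via the metric Linial--Magen lemma (Lemma~\ref{lem:linialmagen}). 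Your single-pair step cannot substitute for this permutation argument.
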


This inequality was recently proved for finite connected graphs by Pierre-Nicolas Jolissaint and Alain Valette (see \cite{Jo-Va11}, Theorem 1). Here we propose a generalization that holds for the class of locally finite spaces such that each path-connected component is finite. Of course, this class contains all finite connected graphs.\\

\textbf{Acknowledgements:} We would like to thank Pierre-Nicolas Jolissaint for useful comments on Sec. \ref{se:jolivale} and a referee for helpful comments to improve the exposition of the paper.

\section{The Jordan curve theorem in $\mathbb Z^2$}\label{se:jordan}

The classical Jordan curve theorem states that a simple closed curve $\gamma$ in $\mathbb R^2$ separates $\mathbb R^2$ in two path-connected components, one bounded and one unbounded and $\gamma$ is the boundary of each of these components. In this section we want to prove an analogous result in $\mathbb Z^2$ with the Euclidean distance. One is tempted to define a simple circuit in $\mathbb Z^2$ as a continuous circuit $x_0x_1\ldots x_{n-1}x_n$ such that the $x_i$'s are pairwise distinct for $i\in\{0,1,\ldots,n-1\}$ and $x_0=x_n$. With this definition the Jordan curve theorem would be false: consider the following continuous circuit:
$$
(0,0)(0,-1)(1,-1)(2,-1)(2,0)(2,1)(1,1)(1,2)(0,2)(-1,2)(-1,1)(-1,0)(0,0).
$$
This circuit is \emph{simple} in the previous sense, but it separates the grid $\mathbb Z^2$ in three path-connected components: in some sense, this circuit behaves like the $8$-shape curve in $\mathbb R^2$, which is not simple. A discrete analogue of a simple circuit is, in our opinion, something different.

First of all, we need to introduce some notation. Let $(x,y)\in\mathbb Z^2$,
\begin{itemize}
\item $dB_1(x,y)$ denotes the set $\{(x+1,y),(x-1,y),(x,y-1),(x,y+1)\}$,
\item $dB_2(x,y)$ denotes the set $\{(x-1,y-1),(x-1,y+1),(x+1,y+1),(x+1,y-1)\}$.
\end{itemize}

\begin{definition}\label{def:simplecurve}
A simple circuit in $\mathbb Z^2$ is a continuous path $x_0x_1\ldots x_{n-1}x_n$ such that
\begin{itemize}
\item The $x_i$'s are pairwise distinct for $i\in\{0,1,\ldots,n-1\}$ and $x_0=x_n$.
\item Whenever $x_i\in dB_2(x_j)$, for $j>i$, then $j=i+2$ and
$$
x_{i+1}\in dB_1(x_i)\cap dB_1(x_j)
$$
\end{itemize}
\end{definition}

\begin{remark}\label{rem:comparison}
{\rm A Jordan curve theorem in $\mathbb Z^2$ with the same notion of connectivity as ours has been proved in \cite{Ki00} for the so-called strict circuits. They are circuits $\gamma=x_0x_1\ldots x_{n-1}x_n$ such that $|dB_2(x_i)|=2$, for all $i$. Consequently, every strict circuit is also simple. The converse is not true, since the circuit
$$
\gamma=(1,0)(1,1)(0,1)(-1,1)(-1,0)(-1,-1)(0,-1)(1,-1)(1,0)
$$
is simple but not strict, since $|dB_2(1,0)|=3$.}
\end{remark}

For simplicity, we divide the proof of the Jordan curve theorem in $\mathbb Z^2$ in two parts: in Theorem \ref{th:jordan}, we prove that $\mathbb Z^2\setminus\gamma$ has two path-connected components, one finite and one infinite; in Proposition \ref{prop:boundary}, after defining a good notion of boundary, we prove that $\gamma$ is the boundary of each of these path-connected components.

\begin{theorem}\label{th:jordan}
Let $\gamma$ be a simple circuit in $\mathbb Z^2$ not containing squares\footnote{A square in $\mathbb Z^2$ is just a set of four points of the shape $(x_0,y_0),(x_0+1,y_0),(x_0+1,y_0+1),(x_0,y_0+1)$. This hypothesis has an explanation in terms of A-theory. In this theory, squares are homotopic equivalent to one point and therefore they do not contribute in disconnecting the grid $\mathbb Z^2$.}. Then $\mathbb Z^2\setminus\gamma$ has two path connected components, one finite and one infinite.
\end{theorem}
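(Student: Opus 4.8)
The plan is to mimic the combinatorial proof of the Jordan curve theorem for polygons, transferring the argument to the grid via a parity (winding-number) function. First I would associate to the simple circuit $\gamma$ a piecewise-linear closed curve $\widehat\gamma$ in $\mathbb R^2$ obtained by connecting consecutive vertices $x_{i-1},x_i$ by the straight segment between them; here the two clauses of Definition \ref{def:simplecurve} are exactly what is needed to guarantee that $\widehat\gamma$ is an embedded (non-self-intersecting) polygon: the $dB_1$-steps give axis-parallel unit segments, the $dB_2$-steps give diagonal segments, and the restriction ``$x_i\in dB_2(x_j)\Rightarrow j=i+2$ with $x_{i+1}\in dB_1(x_i)\cap dB_1(x_j)$'' rules out the two diagonals of a unit square crossing each other and, together with the no-square hypothesis, rules out a diagonal passing through a lattice point of $\gamma$ or being ``cut'' by the axis-parallel part of $\gamma$. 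Once $\widehat\gamma$ is a genuine Jordan polygon, the classical Jordan curve theorem for polygons gives a bounded inside region $\mathrm{Int}$ and an unbounded outside region $\mathrm{Ext}$ in $\mathbb R^2\setminus\widehat\gamma$, and I define $A=\{p\in\mathbb Z^2\setminus\gamma: p\in\mathrm{Int}\}$ and $B=\{p\in\mathbb Z^2\setminus\gamma: p\in\mathrm{Ext}\}$.

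Next I would show $A$ and $B$ are each path-connected in the sense of Definition \ref{def:discretepaths} (recall that in $\mathbb Z^2$ with the Euclidean metric $dN_1(p)=dB_1(p)\cup\{p\}$, since the nearest points are the four axis-neighbours at distance $1$). The key step is a ``local'' lemma: if $p,q\in\mathbb Z^2\setminus\gamma$ are axis-neighbours in $\mathbb R^2$ and the open segment $pq$ meets $\widehat\gamma$, then it meets it in at most... — more robustly, I would instead use a staircase-connectivity argument inside $\mathrm{Int}$: given two lattice points in $\mathrm{Int}$, join them by a path in $\mathbb R^2$ avoiding $\widehat\gamma$, push it to a grid-aligned staircase, and repair the finitely many places where the staircase would cross $\widehat\gamma$ by detouring one unit, using the no-square hypothesis to guarantee that the detour vertex is again off $\gamma$ and still on the correct side. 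The same argument, now with enough room near infinity, handles $B$; finiteness of $A$ is immediate since $\mathrm{Int}$ is bounded and $B$ is infinite since it contains all of $\mathbb Z^2$ outside a large ball.

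Finally I would check that $A$ and $B$ are disjoint, nonempty, and exhaust $\mathbb Z^2\setminus\gamma$ — disjointness and exhaustion are immediate from the classical dichotomy $\mathbb R^2\setminus\widehat\gamma=\mathrm{Int}\sqcup\mathrm{Ext}$ together with $\mathbb Z^2\cap\widehat\gamma=\gamma$ (this last equality is where the no-square hypothesis and the second clause of Definition \ref{def:simplecurve} are used: no diagonal segment of $\widehat\gamma$ passes through an extra lattice point), and nonemptiness of $A$ follows because $\mathrm{Int}$ has positive area and, after a further small argument, must contain a lattice point — or, alternatively, by exhibiting a concrete interior lattice point adjacent to a vertex of $\gamma$ on the correct side. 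That $A,B$ are the only path-connected components of $\mathbb Z^2\setminus\gamma$ then follows since any continuous path in $\mathbb Z^2\setminus\gamma$ lifts to a path in $\mathbb R^2\setminus\widehat\gamma$ (each unit or diagonal step stays in one component), so it cannot jump between $A$ and $B$.

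I expect the main obstacle to be the first step: rigorously proving that $\widehat\gamma$ is a simple closed polygon, i.e.\ that the two clauses of Definition \ref{def:simplecurve} plus the no-square hypothesis really do preclude \emph{all} self-intersections of the associated PL curve — in particular the interaction between a diagonal segment (from a $dB_2$-step) and a far-away axis-parallel segment, and between two diagonal segments that share an endpoint-square. Everything downstream is then a routine, if slightly tedious, transfer of the classical polygonal Jordan curve theorem to the lattice.
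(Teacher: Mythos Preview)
Your approach is essentially the paper's own: associate to $\gamma$ a piecewise-linear closed curve in $\mathbb R^2$, invoke the classical Jordan curve theorem, intersect the two sides with $\mathbb Z^2$, and then lift a continuous path in $\mathrm{Int}$ (resp.\ $\mathrm{Ext}$) to a discrete path, repairing at each unit square using the simplicity and no-square hypotheses to find an admissible corner vertex.

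One clarification that dissolves your ``main obstacle'': there are no $dB_2$-steps in a continuous path. By Definition~\ref{def:discretepaths} (and as you yourself observe when identifying $dN_1(p)=\{p\}\cup dB_1(p)$), every step of $\gamma$ is an axis-parallel unit step, so $\widehat\gamma$ is built entirely from unit horizontal and vertical segments; the second clause of Definition~\ref{def:simplecurve} is not describing a type of step but a constraint on \emph{non-consecutive} vertices $x_i,x_j$ of the circuit that happen to be diagonal neighbours, forcing $j=i+2$ with the intermediate vertex at the corner. With this correction your worry about diagonal segments crossing axis-parallel ones disappears, and showing $\widehat\gamma$ is simple reduces to checking that no two distinct unit edges overlap or cross --- which is exactly what the combination of distinct vertices, the $dB_2$-clause, and the no-square hypothesis gives.
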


\begin{proof}
Embed canonically $\mathbb Z^2$ into $\mathbb R^2$ and construct the following subset $\tilde\gamma$ in $\mathbb R^2$:
\begin{itemize}
\item $\tilde\gamma$ contains all points contained by $\gamma$.
\item If two points of $\gamma$ are adjacent vertices of a square, then $\tilde\gamma$ contains the segment connecting these two points.
\end{itemize}
It is easy to see that $\tilde\gamma$ can be realized as a closed curve in $\mathbb R^2$ which is simple in the standard sense. Indeed it is made by sides of squares, without repetitions. By the classical Jordan curve theorem, let $Int(\tilde\gamma)$ and $Ext(\tilde\gamma)$ be the two path connected components of $\mathbb R^2\setminus\tilde\gamma$. Define $Int(\gamma)=Int(\tilde\gamma)\cap\mathbb Z^2$ and $Ext(\gamma)=Ext(\tilde\gamma)\cap\mathbb Z^2$ and let us prove that these are exactly the two path connected components of $\mathbb Z^2\setminus\gamma$. Of course, it is enough to show that both $Int(\gamma)$ and $Ext(\gamma)$ are path-connected, since the sets $Int(\gamma),Ext(\gamma),\gamma$ form a partition of $\mathbb Z^2$. So, let us start proving that $Int(\gamma)$ is path connected. Since $Int(\tilde\gamma)$ is bounded, we need first to prove that $Int(\gamma)$ is not empty. Let $(x_0,y_0)\in Int(\tilde\gamma)$. By continuity, we may suppose that $(x_0,y_0)$ is in the interior of some square with vertices in $\mathbb Z^2$. Let $(x_1,y_1),(x_1+1,y_1),(x_1+1,y_1+1),(x_1,y_1+1)$ be such vertices. Observe that at least one of these points must belong outside of $\gamma$, since $\gamma$ does not contain squares. This point has to belong also in $Int(\tilde\gamma)$, since the segment line connecting this point to $(x_0,y_0)$ does not intersect $\tilde\gamma$. Therefore, we have found a point in $Int(\gamma)$. Now we prove that $Int(\gamma)$ is path-connected. Let $(w_0,z_0),(w_1,z_1)\in Int(\gamma)$ and let $\tilde\delta$ be a continuous path in $Int(\tilde\gamma)$ connecting them.  Let $I_i,i=0,\ldots N$, be a covering of the interval $[0,1]$, made of intervals $I_i=[a_i,b_i]$, such that
\begin{itemize}
\item $a_0=0$ and $b_N=1$,
\item $a_i<a_{i+1}$, for all $i$,
\item $a_i=b_{i-1}$, for all $i=1,\ldots N$,
\item For all $i=0\ldots,N$, $\tilde\gamma|_{I_i}$ belongs to some closed squares with vertices in $\mathbb Z^2$,
\item For all $i=0,\ldots, N$ both $\tilde\gamma(a_i)$ and $\tilde\gamma(b_i)$ belongs to the side of some square with vertices in $\mathbb Z^2$.
\end{itemize}
It is easy to construct explicitly such a covering, making use of continuity of $\tilde\gamma$ and compactness of the interval $[0,1]$.
Now, lift $\tilde\delta$ to a discrete path $\{(c_k,d_k)\}$ as follows:
\begin{itemize}
\item Of course, define $(c_0,d_0)=\tilde\delta(0)=(w_0,z_0)$,
\item If $\tilde\delta(\frac1N)\in\mathbb Z^2$, we have two sub-cases:\\
\begin{itemize}
\item If $\tilde\delta(\frac1N)$ is adjacent\footnote{Recall that we are working inside a square and therefore adjacent vertices are exactly the extremal point of a side of the square and opposite vertices are the extremal points of a diagonal of the square.} to $(w_0,z_0)$, define $(c_1,d_1)=\tilde\delta(\frac1N)$,
\item If If $\tilde\delta(\frac1N)$ is opposite to $(w_0,z_0)$, first define $(c_2,d_2)=\tilde\delta(\frac1N)$ and then observe that the two points of $dB_1(c_0,d_0)\cap dB_1(c_2,d_2)$ cannot belong both to $\gamma$, since $\gamma$ is simple. Let, $(c_1,d_1)$ be the one that does not belong to $\gamma$. To see that it is the right choice, it suffices to observe that $(c_1,d_1)\notin Ext(\tilde\gamma)$. To see this, just observe that the segment line connecting $(c_1,d_1)$ to $(c_2,d_2)$ does not intersect $\tilde\gamma$ and therefore $(c_1,d_1)\notin Ext(\tilde\gamma)$.
    \\
\end{itemize}

Now suppose that $\tilde\delta(\frac1N)\notin\mathbb Z^2$. Since $\tilde\delta(\frac1N)\in Int(\tilde\gamma)$, in particular $\tilde\delta(\frac1N)\notin\tilde\gamma$. It follows that there is at least one extremal vertex of the (unique) side of $\mathbb Z^2$ containing $\tilde\delta(\frac1N)$ which does not belong to $\gamma$ (otherwise, by construction of $\tilde\gamma$, we would have $\tilde\delta(\frac1N)\in\tilde\gamma)$. This vertex, now denoted by $(c_2,d_2)$ has to belong to $Int(\gamma)$, since the segment line connecting $(c_2,d_2)$ to $\tilde\delta(\frac1N)$ does not intersect $\tilde\gamma$. Now, if $(c_2,d_2)$ is adjacent to $(c_0,d_0)$, we can just define $(c_1,d_1):=(c_2,d_2)$; otherwise, observe that one of the two points in $dB_1(c_0,d_0)\cap dB_1(c_2,d_2)$ has to belong to $Int(\gamma)$ and we can pick $(c_1,d_1)$ to be one of them.

    \item And so on, for all $i$ up to $N$.
\end{itemize}
In a similar way one shows that $Ext(\gamma)$ is path-connected.
\end{proof}

Now, in order to have a complete analogue of the classical Jordan curve theorem we need to prove a discrete analogue of the fact that $\gamma$ is the boundary of each of the path-connected components of $\mathbb R^2\setminus\gamma$. In order to do that, we use a property that in the classical setting is implied by injectivity.\\

In order to describe this property, let $\gamma$ be a simple circuit in $\mathbb R^2$ in classical sense. Let $Int(\gamma)$ be the bounded path-connected component of $\mathbb R^2\setminus\gamma$. For any point $(x_0,y_0)\in Int(\gamma)$, define four points as follows
\begin{itemize}
\item $(x_0^+,y_0)$ is the first point where the horizontal half-line $x\geq x_0$, $y=y_0$, intersects $\gamma$,
\item $(x_0^-,y_0)$ is the first point where the horizontal half-line $x\leq x_0$, $y=y_0$, intersects $\gamma$,
\item $(x_0,y_0^+)$ is the first point where the vertical half-line $x=x_0$, $y\geq y_0$, intersects $\gamma$,
\item $(x_0,y_0^-)$ is the first point where the vertical half-line $x=x_0$, $y\leq y_0$, intersects $\gamma$.
\end{itemize}

Making this procedure for each point belonging to the path-connected component containing $(x_0,y_0)$, we get the whole $\gamma$. Observe that this would have been false if $\gamma$ were not simple. Our definition of simplicity is exactly the one that makes this procedure working in our discrete world of $\mathbb Z^2$. Indeed, if now $\gamma$ is a simple circuit in $\mathbb Z^2$, the previous construction gives a set which is in general smaller than $\gamma$, because of \emph{angles}, but, if $\gamma$ is simple, it can be \emph{completed} without ambiguity. Formally, given a point $(x_0,y_0)\in Int(\gamma)$, construct four points as before and denote by $A$ the set of points obtained by making this procedure for all points belonging to the path-connected component containing $(x_0,y_0)$. Now, if $(x_1,y_1),(x_2,y_2)\in A$ are opposite vertices of some square, our hypothesis that $\gamma$ is simple and does not contain squares, tells us that there is a unique common adjacent point to both $(x_1,y_1),(x_2,y_2)$ belonging to $\gamma$. Denote by $\overline{Int(\gamma)}$ the set obtained adding to $A$ all these \emph{angle points}. By an analogous construction starting from $(x_0,y_0)\in Ext(\gamma)$, we may define $\overline{Ext(\gamma)}$.\\

The following proposition is now straightforward and concludes our proposal of a discrete analogue of the Jordan curve theorem in $\mathbb Z^2$.

\begin{proposition}\label{prop:boundary}
Let $\gamma$ be a simple circuit in $\mathbb Z^2$ that does not contain squares. Then
$$
\overline{Int(\gamma)}=\overline{Ext(\gamma)}=\gamma
$$
\end{proposition}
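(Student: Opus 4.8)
\emph{Proof plan.} I would prove the two inclusions $\overline{Int(\gamma)},\overline{Ext(\gamma)}\subseteq\gamma$ and $\gamma\subseteq\overline{Int(\gamma)}\cap\overline{Ext(\gamma)}$ separately. The first is immediate from the definitions: for each $q$ in a path-connected component of $\mathbb Z^2\setminus\gamma$ the four associated points are, by construction, points of $\gamma$, and every angle point added afterwards is by definition chosen on $\gamma$. So all the work is in the reverse inclusion, and it rests on the auxiliary curve $\tilde\gamma$ built in the proof of Theorem \ref{th:jordan}, which is a simple closed curve in $\mathbb R^2$ made of unit lattice segments joining consecutive points of $\gamma$, and whose two complementary regions satisfy $Int(\tilde\gamma)\cap\mathbb Z^2=Int(\gamma)$ and $Ext(\tilde\gamma)\cap\mathbb Z^2=Ext(\gamma)$.

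The main tool is the following elementary lemma, which I would establish first: writing $A$ (resp. $A'$) for the set of points of $\gamma$ arising from $Int(\gamma)$ (resp. $Ext(\gamma)$) by the four-point procedure, one has, for $p\in\gamma$, that $p\in A$ if and only if $dB_1(p)\cap Int(\gamma)\neq\emptyset$, and likewise for $A'$ and $Ext(\gamma)$. The point is that since $\tilde\gamma$ is a union of unit lattice segments, a vertical ray issued upward from a lattice point $q=(a,b')\in Int(\gamma)$ meets $\gamma$ first exactly at the point of $\gamma$ on the line $x=a$ with least ordinate above $b'$; if that point is $p=(a,b)$, then $(a,b-1)$ lies strictly between, is not on $\gamma$, and the sub-segment from $q$ to it meets neither $\gamma$ nor the added segments of $\tilde\gamma$ (a piece of $\tilde\gamma$ hitting $x=a$ strictly between two integer heights would be a vertical unit segment joining two points of $\gamma$ on $x=a$, which cannot occur below $p$), so $(a,b-1)\in Int(\gamma)\cap dB_1(p)$. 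Conversely, if some axis-neighbour of $p$ lies in $Int(\gamma)$, the ray from it toward $p$ realizes $p$ as a first intersection. The other three directions and the statement for $Ext$ are symmetric.

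With the lemma in hand I would finish as follows. Fix $p=x_i\in\gamma$. Since $\tilde\gamma$ is a simple closed curve, exactly two of the four axis-neighbours of $p$ lie on $\gamma$, namely $x_{i-1}$ and $x_{i+1}$; the remaining two are lattice points of $\mathbb R^2\setminus\tilde\gamma$, hence lie in $Int(\gamma)\sqcup Ext(\gamma)$. If $x_{i-1},x_{i+1}$ are opposite ($p$ a ``straight'' point), the segment of $\tilde\gamma$ through $p$ locally separates the two remaining axis-neighbours onto the two sides of $\tilde\gamma$, so one is in $Int(\gamma)$ and the other in $Ext(\gamma)$, whence $p\in A\cap A'\subseteq\overline{Int(\gamma)}\cap\overline{Ext(\gamma)}$. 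If $x_{i-1},x_{i+1}$ are adjacent ($p$ a ``corner''), they form a diagonal pair, $\tilde\gamma$ locally looks like an ``L'', and the only neighbour of $p$ lying in the convex quadrant of that ``L'' is the diagonal point $p'$ completing the unit square on $x_{i-1},p,x_{i+1}$. Using that $\gamma$ is simple and square-free one shows $p'\notin\gamma$: if $p'=x_k$ then $x_i\in dB_2(x_k)$ or $x_k\in dB_2(x_i)$, so Definition \ref{def:simplecurve} forces $k=i+2$ or $i=k+2$ and identifies four consecutive vertices $x_{i-1},x_i,x_{i+1},x_k$ (or $x_k,x_{i-1},x_i,x_{i+1}$) that close up into a unit square, contradicting the hypothesis on $\gamma$. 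Hence $p'$ and the two remaining axis-neighbours $q_1,q_2$ of $p$ are partitioned by $\tilde\gamma$, with $p'$ alone on one side and $q_1,q_2$ on the other. If $p'\in Int(\gamma)$ and $q_1,q_2\in Ext(\gamma)$, the lemma gives $p\in A'\subseteq\overline{Ext(\gamma)}$; moreover $p'$ is an axis-neighbour of both $x_{i-1}$ and $x_{i+1}$, so $x_{i-1},x_{i+1}\in A$, and since $p$ is the unique common $\gamma$-neighbour of the diagonal pair $x_{i-1},x_{i+1}$ (again because $p'\notin\gamma$), $p$ is one of the angle points, so $p\in\overline{Int(\gamma)}$ as well. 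The opposite assignment of sides is symmetric. This proves $\gamma\subseteq\overline{Int(\gamma)}\cap\overline{Ext(\gamma)}$, and together with the easy inclusion it yields $\overline{Int(\gamma)}=\overline{Ext(\gamma)}=\gamma$.

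The step I expect to be the real obstacle is the local topological analysis at a corner: making rigorous that the convex quadrant and the complementary reflex region of the ``L'' fall into different components of $\mathbb R^2\setminus\tilde\gamma$, and that the named lattice neighbours genuinely lie in those components — a ``Jordan curve theorem in a neighbourhood of a point'' that must be combined with careful book-keeping via the $dB_2$ clause so that no stray piece of $\tilde\gamma$ interferes. The algebraic core, namely squeezing a forbidden square out of the assumption $p'\in\gamma$, is where both the square-free and the simplicity hypotheses are used, and it is the part I would write out in full detail.
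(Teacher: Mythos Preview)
Your proposal is correct, but it is far more elaborate than what the paper actually does. The paper's proof is a single sentence: it observes that the discretely defined sets $\overline{Int(\gamma)}$ and $\overline{Ext(\gamma)}$ coincide with the (lattice points of the) classical boundaries of $Int(\tilde\gamma)$ and $Ext(\tilde\gamma)$, and then invokes the classical Jordan curve theorem, which says both boundaries equal $\tilde\gamma$; since the lattice points of $\tilde\gamma$ are exactly $\gamma$, the result follows. You instead unpack this identification explicitly: you prove the two inclusions separately, isolate a lemma characterising membership in $A$ via the existence of an axis-neighbour in $Int(\gamma)$, and then run a straight/corner case analysis at each vertex of $\gamma$, using the $dB_2$ clause and the no-squares hypothesis to rule out $p'\in\gamma$ and to recover corner vertices as angle points. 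What the paper buys is brevity at the price of leaving the key observation unjustified; what your route buys is an honest verification of precisely that observation, and in particular it makes visible where simplicity and the square-free hypothesis are actually used (the paper's one-liner does not). Your own diagnosis of the delicate step --- the local Jordan-type separation at a corner and the exclusion of stray segments of $\tilde\gamma$ --- is accurate, and once that is written out your argument is complete.
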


\begin{proof}
Just observe that $\overline{Int(\gamma)}$ and $\overline{Ext(\gamma)}$ are respectively the boundary of $Int(\tilde\gamma)$ and $Ext(\tilde\gamma)$, where $\tilde\gamma$ is constructed as in the proof of Theorem \ref{th:jordan}.
\end{proof}

\section{P.N.Jolissaint-Valette's inequality for finite metric spaces}\label{se:jolivale}

The theory of (approximate) embedding of metric spaces in some other well-understood metric space, as a Banach space or a Hilbert space, is now a widely explored field of research, after the breakthrough papers of Linial-London-Rabinovich\cite{Li-Lo-Ra95} and Yu\cite{Yu00}, that found relations among it, Theoretical Computer Science and $K$-theory of $C^*$-algebras. One of the most basic notions in this theory is the notion of $\ell^p$-distortion, which measures how badly a metric space can be embedded in an $\ell^p$-space in a bi-lipschitz way.

For the convenience of the reader we recall that a bi-lipschitz embedding of a metric space $(X,d)$, in this context, is a mapping $F:X\rightarrow\ell^p$ such that there are constants $C_1,C_2$ such that for all $x,y\in X$ one has
$$
C_1d(x,y)\leq d_p(F(x),F(y))\leq C_2d(x,y)
$$
where $d_p$ stands for the $\ell^p$-distance. It is clear that $F$ is injective and so we can consider $F^{-1}:F(X)\rightarrow X$. Therefore, the following notation makes sense,
$$
||F||_{Lip}=\sup_{x\neq y}\frac{d_p(F(x),F(y))}{d(x,y)}
$$
and
$$
||F^{-1}||_{Lip}=\sup_{x\neq y}\frac{d(x,y)}{d_p(F(x),F(y))}
$$
The product $||F||_{Lip}||F^{-1}||_{Lip}$ is called \emph{distortion} of $F$ and denoted by $Dist(F)$.

\begin{definition}\label{def:distortion}
The $\ell^p$-distortion of a metric space $X$ is the following number

\begin{align}\label{eq:distortion}
c_p(X)=\inf\left\{Dist(F) : F \text{ is a bi-lipschitz embedding}\right\}
\end{align}

\end{definition}

In \cite{Jo-Va11} (Theorem 1 and Proposition 3), Pierre-Nicolas Jolissaint and Alain Valette proved that for finite graphs the following inequality holds:

\begin{align}
c_p(X)\geq\frac{D(X)}{2}\left(\frac{|X|}{|E|\lambda_1^{(p)}}\right)^{\frac{1}{p}}
\end{align}
where $E$ denotes the edge set and
\begin{align}\label{eq:displacement}
D(X)=\max_{\alpha\in Sym(X)}\min_{x\in X}d(x,\alpha(x))
\end{align}

We want to extend this inequality to at least all finite metric spaces. Let $(X,d)$ be a locally finite metric space and let $X_1,\ldots X_n$ be the partition of $X$ in path-connected components. The basic idea is clearly to apply P.N.Jolissaint-Valette's inequality on each of them, but unfortunately this application is not straightforward, since a path-connected component might not look like a graph (think, for instance, of the space $[-n,n]^2\setminus\{(0,0)\}\subseteq\mathbb Z^2$ equipped with the metric induced by the standard embedding into $\mathbb R^2$). So we have to be a bit careful to apply P.N.Jolissaint-Valette's argument.

\begin{remark}\label{rem:normalform}
{\rm Since the $\ell_p$-distortion does not depend on rescaling the metric and since we are going to work on each path-connected component separately, we can suppose that each $X_i$ is in normal form\footnote{Let $(X,d)$ be a locally finite path-connected metric space. Given $x\in X$, let $R_x$ be the radius of the smallest closed ball with the center $x$ containing at least two points. It is straightforward to prove that $R_x$ does not depend on $x$ and it is called \emph{step} of the space. We say that a locally finite path-connected metric space is in normal form if the metric is normalized in such a way that the step is $1$.}.}
\end{remark}

Since we are going to work on a fixed path-connected component, let us simplify the notation assuming directly that $X$ is finite path-connected metric space in normal form. At the end of this section it will be easy to put together all path-connected components.

Let $x,y\in X$, $x\neq y$ and let $x_0x_1\ldots x_{n-1}x_n$ be a continuous path joining $x$ and $y$ of minimal length $n$. Denote by $s$ the floor of $d(x,y)$, i.e. $s$ is the greatest positive integer smaller than or equal $d(x,y)$. Notice that $s\geq1$, since $X$ is in normal form. Denote by $\mathcal P(x,y)$ the set of coverings $P=\{p_1,\ldots,p_s\}$ of the set $\{0,1,\ldots,n\}$ such that\footnote{Observe that each $p_i$ is a subset of $\{0,1,\ldots,n\}$.}
\begin{itemize}
\item If $a\in p_i$ and $b\in p_{i+1}$, then $a\leq b$,
\item the greatest element of $p_i$ is equal to the smallest element of $p_{i+1}$.
\end{itemize}
We denote by $p_i^-$ and $p_i^+$ respectively the smallest and the greatest element of $p_i$.

Now we introduce the following set

\begin{align}\label{eq:edgesmetric}
E(X)=\left\{(e^-,e^+)\in X\times X: \exists x,y\in X, p\in\mathcal P(x,y):e^-=p_i^-, e^+=p_i^+\right\}
\end{align}

\begin{remark}\label{rem:metricedgesvsgraphedges}
{\rm If $X=(V,E)$ is a finite connected graph equipped with the shortest path metric, then $E(X)=E$. Indeed in this case $s=n$ and so the only coverings belonging to $\mathcal P(x,y)$ have the shape $p_i=\{x_{i-1},x_i\}$, where the $x_i$'s are taken along a shortest path joining $x$ and $y$.}
\end{remark}

We define a metric analogue of the $p$-spectral gap: for $1\leq p<\infty$, we set

\begin{align}\label{eq:spectralgap}
\lambda_1^{(p)}=\inf\left\{\frac{\sum_{e\in E(X)}|f(e^+)-f(e^-)|^p}{\inf_{\alpha\in\mathbb R}\sum_{x\in X}|f(x)-\alpha|^p}\right\}
\end{align}

where the infimum is taken over all functions $f\in\ell^p(X)$ which are not constant.

\begin{lemma}\label{lem:jolivalette}
Let $(X,d)$ be a finite path-connected metric space in normal form.
\begin{enumerate}
\item For any permutation $\alpha\in Sym(X)$ and $F:X\rightarrow\ell^p(\mathbb N)$, one has
$$
\sum_{x\in X}||F(x)-F(\alpha(x))||_p^p\leq2^p\sum_{x\in X}||F(x)||_p^p
$$
\item For any bi-lipschitz embedding $F:X\rightarrow\ell^p(\mathbb N)$, there is another bi-lipschitz embedding $G:X\rightarrow\ell^p(\mathbb N)$ such that $||F||_{Lip}||F^{-1}||_{Lip}=||G||_{Lip}||G^{-1}||_{Lip}$ and
$$
\sum_{x\in X}||G(x)||_p^p\leq\frac{1}{\lambda_1^{(p)}}\sum_{e\in E(X)}||G(e^+)-G(e^-)||_p^p
$$
\end{enumerate}
\end{lemma}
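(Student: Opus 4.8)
The plan is to handle the two parts separately; both reduce to the scalar definition of $\lambda_1^{(p)}$ combined with elementary convexity estimates, and the only genuinely delicate point is an $\ell^p$-summability check.

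For part (1) I would use the triangle inequality in $\ell^p(\mathbb N)$ followed by convexity of $t\mapsto t^p$ on $[0,\infty)$: for every $x\in X$,
$$
\|F(x)-F(\alpha(x))\|_p^p\leq\bigl(\|F(x)\|_p+\|F(\alpha(x))\|_p\bigr)^p\leq 2^{p-1}\bigl(\|F(x)\|_p^p+\|F(\alpha(x))\|_p^p\bigr).
$$
Summing over $x\in X$ and using that $\alpha$ is a bijection of $X$, so that $\sum_x\|F(\alpha(x))\|_p^p=\sum_x\|F(x)\|_p^p$, produces the constant $2^{p-1}\cdot 2=2^p$, which is exactly the claim.

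For part (2) I would write $F=(f_1,f_2,\ldots)$ with $f_k\colon X\to\mathbb R$, so that $\|F(x)\|_p^p=\sum_k|f_k(x)|^p$ and $\|F(e^+)-F(e^-)\|_p^p=\sum_k|f_k(e^+)-f_k(e^-)|^p$. For each $k$ choose $\alpha_k\in\mathbb R$ attaining $\inf_\alpha\sum_{x\in X}|f_k(x)-\alpha|^p$ (the function $\alpha\mapsto\sum_x|f_k(x)-\alpha|^p$ is continuous and tends to $+\infty$ as $|\alpha|\to\infty$, hence the minimum is attained), set $c=(\alpha_1,\alpha_2,\ldots)$, and define $G(x)=F(x)-c$. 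Applying the defining inequality \eqref{eq:spectralgap} of $\lambda_1^{(p)}$ to each non-constant coordinate $f_k$ --- and observing it holds trivially, as $0\geq 0$, when $f_k$ is constant --- gives, for every $k$,
$$
\sum_{x\in X}|f_k(x)-\alpha_k|^p=\inf_\alpha\sum_{x\in X}|f_k(x)-\alpha|^p\leq\frac{1}{\lambda_1^{(p)}}\sum_{e\in E(X)}|f_k(e^+)-f_k(e^-)|^p.
$$
Summing over $k$ and noting that $G(e^+)-G(e^-)=F(e^+)-F(e^-)$ yields precisely the asserted inequality for $G$.

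It remains to check two points, the first of which is the main (if minor) obstacle. First, $G$ must still land in $\ell^p(\mathbb N)$, i.e. $c\in\ell^p(\mathbb N)$: since each $\alpha_k$ lies between $\min_x f_k(x)$ and $\max_x f_k(x)$ we have $|\alpha_k|^p\leq\max_x|f_k(x)|^p\leq\sum_{x\in X}|f_k(x)|^p$, and summing over $k$ gives $\|c\|_p^p\leq\sum_{x\in X}\|F(x)\|_p^p<\infty$ because $X$ is finite. Second, $G$ must have the same Lipschitz constants as $F$: translation by the fixed vector $c$ is an isometry of $\ell^p(\mathbb N)$, so $d_p(G(x),G(y))=d_p(F(x),F(y))$ for all $x,y\in X$; hence $G$ is again a bi-lipschitz embedding with $\|G\|_{Lip}=\|F\|_{Lip}$ and $\|G^{-1}\|_{Lip}=\|F^{-1}\|_{Lip}$, so in particular the two distortions coincide. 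This completes the plan.
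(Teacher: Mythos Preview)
Your proof is correct and is essentially the same approach as the paper's: the paper defers part (1) to \cite{Jo-Va11} and part (2) to the translation construction in \cite{Gr-No10}, and what you have written is precisely that standard argument spelled out in full (coordinate-wise centering plus the scalar definition of $\lambda_1^{(p)}$), together with the routine checks that $c\in\ell^p$ and that translation preserves distortion.
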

\begin{proof}
\begin{enumerate}
\item This proof is absolutely the same as the proof of Lemma 1 in \cite{Jo-Va11}.
\item Observe that the construction of $G$ made in \cite{Gr-No10} is purely algebraic and so we can apply it. The inequality just follows from our definition of $\lambda_1^{(p)}$.
\end{enumerate}
\end{proof}

Now we have to prove a version for metric spaces of a useful lemma already proved by Linial and Magen for finite connected graph (see \cite{Li-Ma00}, Claim 3.2). We need to introduce a number that measures how far is the metric space to be a graph. We set

\begin{align}\label{eq:dis}
d(X)=\max_{e\in E(X)}d(e^-,e^+)
\end{align}

We have told that $d(X)$ measures how far $X$ is far from being a graph. Indeed, the following proposition holds:

\begin{proposition}\label{prop:graphvsconnectedcomponents}
The followings are equivalent:
\begin{enumerate}
\item $d(X)=1$,
\item The distance of any two points $x,y\in X$ is exactly the length of the shortest path connecting $x,y$
\end{enumerate}
\end{proposition}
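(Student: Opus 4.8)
The plan is to unwind the three relevant definitions --- that of $E(X)$ in \eqref{eq:edgesmetric}, that of the set of coverings $\mathcal P(x,y)$, and that of normal form (Remark \ref{rem:normalform}) --- and to use repeatedly the structural consequence of the normalization: since the step is $1$, the ball $dN_1(x)$ of Definition \ref{def:discretepaths} is exactly the closed unit ball $\{z:d(x,z)\le 1\}$, so two distinct consecutive points of a continuous path always lie at distance exactly $1$ (and the points of a continuous path of minimal length are pairwise distinct). Before splitting into the two implications I would record the one inequality that holds with no hypothesis at all: summing the triangle inequality along a continuous path $x_0x_1\ldots x_n$ gives $d(x_0,x_n)\le n$, so the distance between two points is always at most the minimal length of a continuous path joining them. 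Hence (2) is precisely the assertion that the reverse inequality holds everywhere, and the proposition becomes a comparison of that reverse inequality with the condition $d(X)=1$.

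For $(2)\Rightarrow(1)$ I would take an arbitrary edge $(e^-,e^+)\in E(X)$: by \eqref{eq:edgesmetric} there are points $x,y$, a continuous path $x_0\ldots x_n$ of minimal length joining them, and a covering $P\in\mathcal P(x,y)$ with $s=\lfloor d(x,y)\rfloor$ parts such that $e^-=x_{p_i^-}$ and $e^+=x_{p_i^+}$ for some $i$. Under (2) the number $d(x,y)$ equals $n$ and is therefore an integer, so $s=n$; but, exactly as noted in Remark \ref{rem:metricedgesvsgraphedges}, when $s=n$ the only covering in $\mathcal P(x,y)$ is the trivial one $p_i=\{i-1,i\}$. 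Thus $e^-=x_{i-1}$ and $e^+=x_i$ are distinct consecutive points of a continuous path, so $d(e^-,e^+)=1$ by the observation above. Since $E(X)\neq\emptyset$ (as $X$ has at least two points), this gives $d(X)=\max_{e\in E(X)}d(e^-,e^+)=1$.

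For $(1)\Rightarrow(2)$ I would assume $d(X)=1$, fix $x\neq y$ (the case $x=y$ being trivial) and let $x_0\ldots x_n$ be a continuous path of minimal length $n$ joining them. Since $s:=\lfloor d(x,y)\rfloor\le d(x,y)\le n$ the set $\mathcal P(x,y)$ is non-empty; pick $P\in\mathcal P(x,y)$. The two conditions defining $\mathcal P(x,y)$ force its parts to be consecutive blocks of integers $p_i=\{k_{i-1},k_{i-1}+1,\ldots,k_i\}$ with $0=k_0\le k_1\le\cdots\le k_s=n$, so that $p_i^-=k_{i-1}$ and $p_i^+=k_i$. I claim $k_i-k_{i-1}\le 1$ for every $i$: otherwise the pair $(x_{k_{i-1}},x_{k_i})$ belongs to $E(X)$, hence $d(x_{k_{i-1}},x_{k_i})\le d(X)=1$, so by the observation above $x_{k_i}\in dN_1(x_{k_{i-1}})$ and $x_{k_{i-1}}\in dN_1(x_{k_i})$, and deleting $x_{k_{i-1}+1},\ldots,x_{k_i-1}$ from the path would produce a continuous path from $x$ to $y$ of length strictly less than $n$, contradicting minimality. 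Therefore $n=\sum_{i=1}^{s}(k_i-k_{i-1})\le s=\lfloor d(x,y)\rfloor\le d(x,y)$, which together with $d(x,y)\le n$ forces $d(x,y)=n$; this is (2).

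I expect the only genuinely delicate point to be this \emph{shortcut} step in $(1)\Rightarrow(2)$: one must be certain that two points of $E(X)$ lying at distance $\le 1$ can really be substituted for a whole sub-arc of a path of minimal length, and this is exactly where both the normalization of the step to $1$ and the two membership conditions of Definition \ref{def:discretepaths} ($x_i\in dN_1(x_{i-1})$ \emph{and} $x_{i-1}\in dN_1(x_i)$) are used. Everything else is bookkeeping about the shape of the coverings in $\mathcal P(x,y)$, which the two defining conditions pin down as unions of consecutive integer intervals overlapping in single endpoints.
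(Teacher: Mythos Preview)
Your proof is correct and follows essentially the same route as the paper's: both directions hinge on the observation that in normal form consecutive path-points are at distance exactly $1$, and the key step in $(1)\Rightarrow(2)$ is precisely the ``shortcut'' argument---if some block $p_i$ of a covering spans more than one step, then $(x_{p_i^-},x_{p_i^+})\in E(X)$ forces $d(x_{p_i^-},x_{p_i^+})\le 1$, contradicting minimality of the path. Your write-up is in fact more careful than the paper's (which treats only the three-point case and waves at the general one), and your explicit verification that the $p_i$ are consecutive integer blocks with $k_0=0$, $k_s=n$ fills in bookkeeping the paper leaves implicit.
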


\begin{proof}
In one sense the thesis is trivial: if $X$ is a finite graph, then $E(X)=E$ (by Remark \ref{rem:metricedgesvsgraphedges}) and $d(X)=1$, since the space is supposed to be in normal form. Conversely, suppose that $d(X)=1$, choose two distinct points $x,y\in X$ and let $x_0x_1\ldots x_{n-1}x_n$ be a continuous path of minimal length such that $x_0=x$ and $x_n=y$. Of, course $s\leq d(x,y)\leq n$. So, it suffices to prove that $s=n$. In order to do that, suppose that $s<n$ and observe that every $p\in\mathcal P(x,y)$ would contain some $p_i$ containing at least three points $p_{i}^-=x_{i-1},x_i,x_{i+1}=p_i^+$ (we suppose that they are exactly three, since the general case is similar). Since $X$ is in normal form, it follows that $d(x_{i-1},x_i)=d(x_i,x_{i+1})=1$. Now, suppose that $d(X)=1$, it follows that also $d(x_{i-1},x_{i+1})=1$ and then the path $x_0\ldots x_{i-1}x_{i+1}\ldots x_n$ is still a continuous path connecting $x$ with $y$, contradicting the minimality of the length of the previous path.
\end{proof}

We are now able to prove the generalization of Linial-Magen's lemma that we need.

\begin{lemma}\label{lem:linialmagen}
Let $(X,d)$ be a finite path-connected metric space in normal form and $f:X\rightarrow\mathbb R$. Then
\begin{align}
\max_{x\neq y}\frac{|f(x)-f(y)|}{d(x,y)}\leq\max_{e\in E(X)}|f(e^+)-f(e^-)|\leq d(X)\max_{x\neq y}\frac{|f(x)-f(y)|}{d(x,y)}
\end{align}
\end{lemma}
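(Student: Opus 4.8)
The plan is to prove the two inequalities separately: the right-hand one is immediate from the definitions, and the left-hand one is where the coverings $\mathcal P(x,y)$ carry the argument. Throughout I would abbreviate $M=\max_{x\neq y}\frac{|f(x)-f(y)|}{d(x,y)}$ and $N=\max_{e\in E(X)}|f(e^+)-f(e^-)|$. For the right-hand inequality $N\le d(X)M$, one simply observes that for every $e\in E(X)$ with $e^-\neq e^+$ we have $|f(e^+)-f(e^-)|\le M\,d(e^-,e^+)\le M\,d(X)$, the last step by the definition \eqref{eq:dis} of $d(X)$; pairs with $e^-=e^+$ contribute $0$ and are harmless. Taking the maximum over $e\in E(X)$ gives $N\le d(X)M$.

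For the left-hand inequality $M\le N$, fix distinct $x,y\in X$; it suffices to show $|f(x)-f(y)|\le N\,d(x,y)$. Let $x_0x_1\cdots x_n$ be the chosen continuous path of minimal length with $x_0=x$, $x_n=y$, and set $s=\lfloor d(x,y)\rfloor$. First I would check that $\mathcal P(x,y)\neq\emptyset$: since $X$ is in normal form, consecutive points of a continuous path satisfy $d(x_{i-1},x_i)\le 1$ by Definition \ref{def:discretepaths}, whence $d(x,y)\le n$ and $1\le s\le n$, so the blocks $p_i=\{i-1,i\}$ for $i<s$ together with $p_s=\{s-1,s,\dots,n\}$ constitute an element $P=\{p_1,\dots,p_s\}$ of $\mathcal P(x,y)$. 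Now take any $P\in\mathcal P(x,y)$. The two covering conditions force $p_1^-=0$, $p_s^+=n$ and $p_i^+=p_{i+1}^-$, so the points $x_{p_1^-},\,x_{p_1^+}=x_{p_2^-},\,\dots,\,x_{p_s^+}$ form a chain of $s$ steps from $x$ to $y$ in which each consecutive pair $(x_{p_i^-},x_{p_i^+})$ belongs to $E(X)$ by the very definition \eqref{eq:edgesmetric} (with $x,y,P$ as witnesses). Telescoping along this chain,
\[
|f(x)-f(y)|\le\sum_{i=1}^{s}\bigl|f(x_{p_i^+})-f(x_{p_i^-})\bigr|\le sN\le d(x,y)\,N,
\]
and dividing by $d(x,y)$ and taking the supremum over $x\neq y$ yields $M\le N$, completing the proof.

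The only step that requires a moment's thought — hence the ``main obstacle'', although it is a mild one — is the nonemptiness of $\mathcal P(x,y)$, i.e.\ the bound $s\le n$ for the minimal path; this is precisely where the normal-form normalization of Remark \ref{rem:normalform} enters, and everything else is a one-line triangle-inequality estimate. Note that minimality of the path plays no essential role beyond matching the definition of $\mathcal P(x,y)$: any continuous path of length $n\ge d(x,y)\ge s$ would serve equally well, since the telescoping only uses that the chain $x_{p_1^-},x_{p_1^+},\dots,x_{p_s^+}$ joins $x$ to $y$ through $s$ edges of $E(X)$.
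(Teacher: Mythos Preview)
Your proof is correct and follows essentially the same route as the paper's: telescope $|f(x)-f(y)|$ along the chain $x_{p_1^-},x_{p_1^+},\dots,x_{p_s^+}$ determined by a covering $P\in\mathcal P(x,y)$, bound each of the $s$ increments by $N$, and use $s\le d(x,y)$. The paper phrases the same estimate by first choosing the maximizing pair $x,y$ and then exhibiting a single edge $(x_{p_k^-},x_{p_k^+})$ whose increment dominates the average $\frac{1}{s}\sum_i|f(x_{p_i^+})-f(x_{p_i^-})|\ge\frac{|f(x)-f(y)|}{d(x,y)}$; your extra verification that $\mathcal P(x,y)\neq\emptyset$ and your explicit treatment of the second inequality simply fill in details the paper leaves implicit.
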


\begin{proof}
Let us prove only the first inequality, since the second will be trivial \emph{a posteriori}. Let $x,y\in X$ where the maximum in the left hand side is attained and let $x_0x_1\ldots x_{n-1}x_n$ be a continuous path of minimal length connecting $x$ with $y$. Let $p\in\mathcal P(x,y)$ and let $k$ be an integer such that
$$
|f(p_k^+)-f(p_k^-)|\geq|f(p_i^+)-f(p_i^-)|
$$
for all $i$. It follows
$$
|f(p_k^+)-f(p_k^-)|=\frac{s|f(p_k^+)-f(p_k^-)|}{s}\geq\frac{\sum_{i=1}^s|f(p_i^+)-f(p_i^-)|}{s}\geq
$$
Now we use the fact that the covering $p$ is made exactly by $s$ sets. It follows that $x$ and $y$ belong to the union of the $p_i$'s and we can use the triangle inequality and obtain
$$
\geq\frac{|f(x)-f(y)|}{s}\geq\frac{|f(x)-f(y)|}{d(x,y)}
$$
\end{proof}

We are now ready to prove the main result of this section.

\begin{theorem}\label{th:jolivale}
Let $(X,d)$ be a finite path-connected metric space in normal form. For all $1\leq p<\infty$, one has
\begin{align}
c_p(X)\geq\frac{D(X)}{2d(X)}\left(\frac{|X|}{|E(X)|\lambda_1^{(p)}}\right)^{\frac{1}{p}}
\end{align}
where
\begin{align}\label{eq:displacement}
D(X)=\max_{\alpha\in Sym(X)}\min_{x\in X}d(x,\alpha(x))
\end{align}

\end{theorem}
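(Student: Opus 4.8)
The plan is to transcribe, into the metric setting, the Newman--Rabinovich type argument of Jolissaint and Valette \cite{Jo-Va11} (which itself refines the Linial--Magen estimate \cite{Li-Ma00}), the three preparatory lemmas having been set up precisely for this purpose. Fix $1\le p<\infty$ and let $F\colon X\to\ell^p(\mathbb N)$ be an arbitrary bi-lipschitz embedding; since $c_p(X)=\inf_F Dist(F)$ it is enough to bound $Dist(F)=\|F\|_{Lip}\,\|F^{-1}\|_{Lip}$ below by the right-hand side and then pass to the infimum. First I would invoke Lemma~\ref{lem:jolivalette}(2) to replace $F$ by an embedding $G$ with $Dist(G)=Dist(F)$ satisfying
\begin{equation*}
\sum_{x\in X}\|G(x)\|_p^p\ \le\ \frac{1}{\lambda_1^{(p)}}\sum_{e\in E(X)}\|G(e^+)-G(e^-)\|_p^p .
\end{equation*}
Everything then reduces to squeezing the quantity $\sum_{x\in X}\|G(x)\|_p^p$ between an upper bound controlled by $\|G\|_{Lip}$ and a lower bound controlled by $\|G^{-1}\|_{Lip}$.

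For the upper bound, for each $e\in E(X)$ the definition of $\|G\|_{Lip}$ together with that of $d(X)$ in \eqref{eq:dis} give $\|G(e^+)-G(e^-)\|_p\le\|G\|_{Lip}\,d(e^-,e^+)\le d(X)\,\|G\|_{Lip}$ (this is the $\ell^p$-valued form of the second inequality of Lemma~\ref{lem:linialmagen}, and is the sole source of the correction factor $d(X)$). Summing over the $|E(X)|$ metric edges and feeding the estimate into the displayed inequality yields
\begin{equation*}
\sum_{x\in X}\|G(x)\|_p^p\ \le\ \frac{|E(X)|\,d(X)^p}{\lambda_1^{(p)}}\,\|G\|_{Lip}^p .
\end{equation*}
For the lower bound I would choose a permutation $\alpha\in Sym(X)$ realising the maximum in \eqref{eq:displacement}, so that $d(x,\alpha(x))\ge D(X)$ for every $x$; then $\|G(x)-G(\alpha(x))\|_p\ge d(x,\alpha(x))/\|G^{-1}\|_{Lip}\ge D(X)/\|G^{-1}\|_{Lip}$, and summing over $x$ (using that $\alpha$ is a bijection) gives $\sum_{x}\|G(x)-G(\alpha(x))\|_p^p\ge |X|\,D(X)^p/\|G^{-1}\|_{Lip}^p$. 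Applying Lemma~\ref{lem:jolivalette}(1) to the same $\alpha$ gives $\sum_{x}\|G(x)-G(\alpha(x))\|_p^p\le 2^p\sum_{x}\|G(x)\|_p^p$, whence
\begin{equation*}
\sum_{x\in X}\|G(x)\|_p^p\ \ge\ \frac{|X|\,D(X)^p}{2^p\,\|G^{-1}\|_{Lip}^p}.
\end{equation*}

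Comparing the last two displays, the common term $\sum_{x\in X}\|G(x)\|_p^p$ cancels and one is left with an inequality relating $\|G\|_{Lip}^p\|G^{-1}\|_{Lip}^p$ to the data $|X|,|E(X)|,D(X),d(X),\lambda_1^{(p)}$; rearranging it, extracting $p$-th roots, using $Dist(G)=Dist(F)$, and finally taking the infimum over all bi-lipschitz $F$ produces the asserted lower bound on $c_p(X)$ (the subsequent passage to Theorem~\ref{th:jolivale1} being then the routine matter of rescaling each path-connected component to normal form via Remark~\ref{rem:normalform} and taking a supremum over the components).

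The argument is essentially bookkeeping once the lemmas are available; the only genuinely new feature compared with the finite-graph case of \cite{Jo-Va11} is that a path-connected component need not be a graph, so one must work with the combinatorially defined edge set $E(X)$ and spectral quantity $\lambda_1^{(p)}$ attached to the coverings $\mathcal P(x,y)$, and accept the loss factor $d(X)$ (which collapses to $1$ exactly when $X$ is a graph, by Proposition~\ref{prop:graphvsconnectedcomponents} and Remark~\ref{rem:metricedgesvsgraphedges}, giving back the original inequality verbatim). The one point I would check with care is that the purely algebraic re-centering of \cite{Gr-No10} underpinning Lemma~\ref{lem:jolivalette}(2) really does go through unchanged in this metric context; but this has already been absorbed into the statement of that lemma, so I anticipate no serious obstacle.
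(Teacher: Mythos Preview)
Your proposal is correct and follows essentially the same route as the paper's proof: both combine Lemma~\ref{lem:jolivalette}(1), Lemma~\ref{lem:jolivalette}(2), and the trivial direction of Lemma~\ref{lem:linialmagen} (i.e.\ $\|G(e^+)-G(e^-)\|_p\le d(X)\|G\|_{Lip}$) in the same order. The only cosmetic difference is organisational: the paper runs a single chain of inequalities starting from $1/\|G^{-1}\|_{Lip}^p$ with an arbitrary fixed-point-free permutation and optimises over $\alpha$ at the end, whereas you sandwich $\sum_x\|G(x)\|_p^p$ between an upper and a lower bound with the optimal $\alpha$ chosen at the outset; the underlying estimates are identical.
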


\begin{proof}
Let $G$ be a bi-lipschitz embedding which verifies the second condition in Lemma \ref{lem:jolivalette} and let $\alpha$ be a permutation of $X$ without fixed points. Let $\rho(\alpha)=\min_{x\in X}d(x,\alpha(x))$. One has
\begin{align*}
\frac{1}{||G^{-1}||_{Lip}^p}\\
&=\min_{x\neq y}\frac{||G(x)-G(y)||_p^p}{d(x,y)^p}\\
&\leq\min_{x\in X}\frac{||G(x)-G(\alpha(x))||_p^p}{d(x,\alpha(x))^p}\\
&\leq\frac{1}{\rho(\alpha)^p}\min_{x\in X}||G(x)-G(\alpha(x))||_p^p\\
&\leq\frac{1}{\rho(\alpha)^p|X|}\sum_{x\in X}||G(x)-G(\alpha(x))||_p^p
\end{align*}
Now apply the first statement of Lemma \ref{lem:jolivalette}:
\begin{align*}
\leq\frac{2^p}{\rho(\alpha)^p|X|}\sum_{x\in X}||G(x)||_p^p
\end{align*}
Now apply the second statement of Lemma \ref{lem:jolivalette}:
\begin{align*}
\leq\frac{2^p}{\rho(\alpha)^p|X|\lambda_1^{(p)}}\sum_{e\in E(X)}||G(e^+)-G(e^-)||_p^p
\leq \frac{2^p|E(X)|}{\rho(\alpha)^p|X|\lambda_1^{(p)}}\max_{e\in E(X)}||G(e^+)-G(e^-)||_p^p
\end{align*}
Now apply Lemma \ref{lem:linialmagen}:
\begin{align*}
\leq\frac{2^pd(X)^p|E(X)|||G||_{Lip}^p}{\rho(\alpha)^p|X|\lambda_1^{(p)}}
\end{align*}
Now recall the definitions in Equation \ref{eq:distortion} and \ref{eq:displacement} and just re-arrange the terms to get the desired inequality.
\end{proof}

Notice that $d(X)\geq1$ and so the inequality gets worse when the metric space is not a graph.

\begin{corollary}\label{cor:jolivale}
Let $X$ be a metric space such that every path-connected component $X_i$ is finite. One has
\begin{align}
c_p(X)\geq\sup_i\frac{D(X_i)}{2d(X_i)}\left(\frac{|X_i|}{|E(X_i)|\lambda_1^{(p)}}\right)^{\frac{1}{p}}
\end{align}
\end{corollary}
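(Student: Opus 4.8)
The plan is to deduce Corollary \ref{cor:jolivale} directly from Theorem \ref{th:jolivale} by a componentwise argument, the only point requiring care being the passage from a single component to the whole space.

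First I would fix a bi-lipschitz embedding $F:X\rightarrow\ell^p(\mathbb N)$ and observe that for each path-connected component $X_i$ the restriction $F|_{X_i}$ is a bi-lipschitz embedding of $X_i$, with $\|F|_{X_i}\|_{Lip}\leq\|F\|_{Lip}$ and $\|(F|_{X_i})^{-1}\|_{Lip}\leq\|F^{-1}\|_{Lip}$, since both suprema in the definition of the Lipschitz constants are taken over a smaller index set $\{x\neq y\}\subseteq X_i\times X_i$. Hence $Dist(F|_{X_i})\leq Dist(F)$, and taking the infimum over all bi-lipschitz embeddings $F$ of $X$ yields $c_p(X_i)\leq c_p(X)$ for every $i$.

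Next I would invoke Remark \ref{rem:normalform} to replace each $X_i$ by a rescaled copy in normal form; since the $\ell^p$-distortion, the quantities $D(X_i)$, $d(X_i)$ are all invariant under scaling the metric (the ratio $D(X_i)/d(X_i)$ is scale-free, and $|X_i|$, $|E(X_i)|$, $\lambda_1^{(p)}$ depend only on the combinatorial data), Theorem \ref{th:jolivale} applies verbatim to each component and gives
\begin{align*}
c_p(X)\geq c_p(X_i)\geq\frac{D(X_i)}{2d(X_i)}\left(\frac{|X_i|}{|E(X_i)|\lambda_1^{(p)}}\right)^{\frac{1}{p}}.
\end{align*}
Finally, taking the supremum over $i$ on the right-hand side gives the claimed inequality.

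The main obstacle, such as it is, is simply the verification that restricting an embedding to a component does not increase the distortion; this is genuinely routine once one notes that $X$ is the disjoint union of its path-connected components, so that there are no cross-component constraints to worry about. (Strictly speaking, if $X$ has infinitely many components one must also check that the supremum on the right is finite or interpret the bound accordingly; but the inequality $c_p(X)\geq c_p(X_i)$ holds for each fixed $i$ regardless, so the supremum statement follows with no extra work.)
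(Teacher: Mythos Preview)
Your argument is correct and follows exactly the paper's approach: the paper's proof is the single line ``Just observe that if $X_i$ is a partition of $X$ then $c_p(X)\geq\sup_ic_p(X_i)$,'' and you have simply spelled out that observation via the restriction argument before applying Theorem~\ref{th:jolivale} componentwise. One small caveat: your parenthetical claim that $|E(X_i)|$ and $\lambda_1^{(p)}$ ``depend only on the combinatorial data'' is not literally true under arbitrary rescaling (the integer $s=\lfloor d(x,y)\rfloor$ entering the definition of $\mathcal P(x,y)$ changes), but this is harmless since, as you correctly note via Remark~\ref{rem:normalform}, those quantities are meant to be computed once each $X_i$ is in normal form.
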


\begin{proof}
Just observe that if $X_i$ is a partition of $X$ then $c_p(X)\geq\sup_ic_p(X_i)$.
\end{proof}

\end{document}